\DeclareMathOperator{\supp}{supp}
\DeclareMathOperator{\Prob}{Prob}
\newcommand{\N}{\mathbb{N}}
\newcommand{\C}{\mathbb{C}}
\newcommand{\norm}[1]{\left\|#1\right\|}
\newcommand{\GAMMA}{\boldsymbol{\Gamma}}
\newcommand{\E}{\mathcal{E}}
\newcommand{\EQ}{\mathcal{E}_Q}
\newcommand{\V}{\mathcal{V}}
\newcommand{\oneQ}{\mathbf{1}_Q}
\title{Expanders and Property A}
\author{A. Khukhro}
\author{N.J. Wright}
\theoremstyle{plain}
\newtheorem{thm}{Theorem}[section]
\newtheorem{lemma}[thm]{Lemma}
\newtheorem{cor}[thm]{Corollary}
\theoremstyle{definition}
\newtheorem{definition}[thm]{Definition}
\begin{document}
\begin{abstract}
We give a cohomological characterisation of expander graphs, and use it to give a direct proof that expander graphs do not have Yu's property A.
\end{abstract}

\maketitle

\section{Introduction}
Property A, first introduced in \cite{Yu}, is a coarse geometric analogue of amenability. 
\begin{definition}
A discrete bounded geometry metric space $X$ has \emph{property A} if for each $x\in X$ and each $n\in \mathbb{N}$, there is an element $f_n(x) \in \Prob(X)$ with
\begin{enumerate}
\item
a sequence $S_n$ such that $\supp(f_n(x)) \subseteq B_{S_n}(x)$, and
\item
for any $R>0$, $\|f_n(x_1)-f_n(x_0)\|_{\ell^1} \rightarrow 0 $ as $n \rightarrow \infty$ uniformly on the set $\{(x_0,x_1): d(x_0,x_1)\leq R\}$.
\end{enumerate}
\end{definition}
In \cite{Yu}, Yu proves that if a metric space has property A then it is uniformly embeddable into Hilbert space. Indeed, this was the original motivation behind this definition, since a result of the same paper \cite{Yu} states that the coarse Baum--Connes conjecture holds for discrete bounded geometry metric spaces which admit a uniform embedding into Hilbert space.

There are few known examples of metric spaces which do not have property A. One such family of examples is provided by \emph{expander graphs} (cf.\ \cite{Lub}, \cite{Mar}). Informally, an expander is a sequence of highly connected graphs which have bounded valency. Expander graphs are used in computer science due to their high connectivity. They are also of theoretical interest as they provide counterexamples to the coarse Baum--Connes conjecture \cite{HLS}.

Expander graphs do not uniformly embed into Hilbert space (see for example \cite{Roe}) and so cannot have property A. In this paper we give a direct, more geometric proof that expanders do not have property A, making the connection between the two properties explicit. This is based on the observation that both the expander condition and property A can be expressed in terms of a coboundary operator which, roughly speaking, measures the size of the (co)boundary of a set of vertices. The cohomological description of property A was given in \cite{BNW}, while the cohomological description of the expander condition is introduced in this paper.

\section{Expanders and cohomology}

Let $\{\Gamma_i\}$ be a sequence of finite graphs. Abusing notation, we will also denote the vertex set by $\Gamma_i$ and the edges by $E_i$. We take the edges to be directed, with an edge connecting $x$ to $y$ if and only if there is an edge connecting $y$ to $x$. The \emph{Cheeger constant} of the graph $\Gamma_i$ is defined by $h(\Gamma_i)=\frac{1}{2}\inf \frac{|\partial F|}{|F|}$, where $F$ ranges over the non-empty subsets of $\Gamma_i$ such that $|F|\leq \frac{1}{2}|\Gamma_i|$ and $\partial F$ denotes the coboundary\footnote{This is usually referred to as the boundary of $F$, however as the map goes from vertices to edges, homologically it is a coboundary.} of $F$, i.e.\ the set of edges of $\Gamma_i$ with exactly one end point in $F$. The factor of $\frac{1}{2}$ compensates for the doubling arising from the use of directed edges. 

\begin{definition}
A finite graph $\Gamma$ is a \emph{$(k,\varepsilon)$-expander} if each vertex of $\Gamma$ has valency at most $k$, and $h(\Gamma)\geq \varepsilon$.

A sequence of finite graphs $\{\Gamma_i\}$ is called an \emph{expander sequence} if $|\Gamma_i|\rightarrow \infty$ and there exists $k,\varepsilon$ such that each $\Gamma_i$ is a $(k,\varepsilon)$-expander.
\end{definition}
It is not obvious that such sequences exist. Their existence was first proved by Pinsker, in a non-constructive way. Margulis was the first to give explicit examples of expanders, using discrete groups with property (T) \cite{Mar}.

Let $\Gamma$ be a finite graph and let $E$ denote its set of directed edges. We view $\mathbb{C}$ as the subspace of $\ell^1(\Gamma)$ consisting of constant functions, and write $\overline{f}$ for the class in $\ell^1(\Gamma)/\mathbb{C}$ represented by $f\in \ell^1(\Gamma)$.
The norm on $\ell^1(\Gamma)/\mathbb{C}$ is the quotient norm defined by $\|\overline{f}\|_{\ell^1/\mathbb{C}}=\inf_{c\in \mathbb{C}}\norm{f+c}_{\ell^1}$. 
We will write $\ell^1_0(E)$ for the subspace of $\ell^1(E)$ consisting of functions whose sum is zero. The norm on $\ell^1_0(E_i)$ is the usual $\ell_1$ norm. 
Define a coboundary map
$$d:\ell^1(\Gamma)/\mathbb{C} \longrightarrow \ell^1_0(E)$$
by $d\overline{f}(e)=f(e^+)-f(e^-)$ where $e^-$ is the starting vertex and $e^+$ is the end vertex of the directed edge $e$.

\begin{lemma}\label{bounded below}
The Cheeger constant $h(\Gamma)$ is at least $\frac{\varepsilon}{2}$ if and only if $\|d\overline{f}\|_{\ell^1}\geq \varepsilon \|\overline{f}\|_{\ell^1/\mathbb{C}}$ for every $\overline{f}\in \ell^1(\Gamma)/\mathbb{C}$.
\end{lemma}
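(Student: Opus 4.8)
My plan is to prove both implications by relating $d$ to indicator functions, via a coarea decomposition. For the ($\Leftarrow$) direction I would substitute $f=\mathbf{1}_F$ for an arbitrary non-empty $F\subseteq\Gamma$. A directed edge $e$ contributes to $d\overline{\mathbf{1}_F}$ exactly when it has one endpoint in $F$, and then $|d\overline{\mathbf{1}_F}(e)|=1$, so $\|d\overline{\mathbf{1}_F}\|_{\ell^1}=|\partial F|$; on the other hand $\inf_{c}\big(|F|\,|1-c|+|\Gamma\setminus F|\,|c|\big)$ is attained at $c=0$ or $c=1$, giving $\|\overline{\mathbf{1}_F}\|_{\ell^1/\mathbb{C}}=\min(|F|,|\Gamma\setminus F|)$. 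Hence the hypothesis forces $|\partial F|\geq\varepsilon\min(|F|,|\Gamma\setminus F|)$ for every $F$, and restricting to $F$ with $|F|\leq\frac12|\Gamma|$ yields $h(\Gamma)\geq\frac{\varepsilon}{2}$. I would also record the bookkeeping fact, used below, that $h(\Gamma)\geq\frac\varepsilon2$ is \emph{equivalent} to $|\partial F|\geq\varepsilon\min(|F|,|\Gamma\setminus F|)$ for all $F$, since $\partial F=\partial(\Gamma\setminus F)$.

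For the ($\Rightarrow$) direction I would run a coarea/layer-cake argument, taking the functions in $\ell^1(\Gamma)$ to be real-valued. Given $\overline f$, choose the representative $f$ obtained by subtracting a median of its values, so that $\|f\|_{\ell^1}=\|\overline f\|_{\ell^1/\mathbb{C}}$ and, crucially, $|\{f>0\}|\leq\frac12|\Gamma|$ and $|\{f<0\}|\leq\frac12|\Gamma|$. First comes the coarea identity $\|d\overline f\|_{\ell^1}=\int_{-\infty}^{\infty}|\partial\{f>t\}|\,dt$, obtained from $|f(e^+)-f(e^-)|=\int_{-\infty}^{\infty}\big|\mathbf{1}_{\{f>t\}}(e^+)-\mathbf{1}_{\{f>t\}}(e^-)\big|\,dt$ summed over the directed edges. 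Then I split the integral at $t=0$: for $t\geq0$ one has $\{f>t\}\subseteq\{f>0\}$, so $|\{f>t\}|\leq\frac12|\Gamma|$ and the Cheeger bound gives $|\partial\{f>t\}|\geq\varepsilon|\{f>t\}|$; for $t<0$ one has $\partial\{f>t\}=\partial\{f\leq t\}$ with $\{f\leq t\}\subseteq\{f<0\}$, so $|\partial\{f>t\}|\geq\varepsilon|\{f\leq t\}|$. Finally the two resulting integrals $\int_0^\infty|\{f>t\}|\,dt$ and $\int_{-\infty}^0|\{f\leq t\}|\,dt$ equal, by the layer-cake formula, $\sum_{f(x)>0}f(x)$ and $\sum_{f(x)<0}|f(x)|$ respectively, and these add up to $\|f\|_{\ell^1}=\|\overline f\|_{\ell^1/\mathbb{C}}$; hence $\|d\overline f\|_{\ell^1}\geq\varepsilon\|\overline f\|_{\ell^1/\mathbb{C}}$.

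The one genuinely delicate point is the choice of representative and the attendant matching of ``sides'': it is the median-centering that makes $\{f>t\}$ small for $t>0$ and $\{f\leq t\}$ small for $t<0$, which is exactly what lets the Cheeger inequality be applied level by level in the right direction. Everything else — the two integral identities, and the fact that the finitely many $t$ at which $\{f>t\}$ and $\{f\geq t\}$ differ do not affect the integrals — is routine. I would also flag that real-valuedness is used essentially here: with the sharp constant $\varepsilon$ the inequality can fail for complex-valued $f$, so I restrict to real-valued $\ell^1$ throughout.
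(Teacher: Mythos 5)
Your proof is correct, and the forward direction is a genuine variant of the paper's argument rather than a reproduction of it. The paper also reduces to indicator functions, but does so discretely: it picks a representative $f'$ with $\overline{f'}=\overline f$ taking positive values, writes $f'=\sum_j a_j\chi_{F_j}$ with the $F_j$ nested and $a_j>0$, uses nestedness to get $\|d\overline{f'}\|_{\ell^1}=\sum_j a_j|\partial F_j|$, handles the level sets with $|F_j|>\frac12|\Gamma|$ via the complement identity $\partial F_j=\partial F_j^c$, and finishes with the triangle inequality $\sum_j a_j\|\overline{\chi_{F_j}}\|_{\ell^1/\mathbb{C}}\geq\|\overline{\sum_j a_j\chi_{F_j}}\|_{\ell^1/\mathbb{C}}$. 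Your coarea integral is exactly the continuum form of that nested sum, but the median-centering changes the bookkeeping: every superlevel set for $t\geq 0$ and every sublevel set for $t<0$ is automatically at most half of $\Gamma$, so you never need the complement trick, and since a median minimises $c\mapsto\sum_x|f(x)-c|$ you end with the exact identity $\int_0^\infty|\{f>t\}|\,dt+\int_{-\infty}^0|\{f\leq t\}|\,dt=\|f\|_{\ell^1}=\|\overline f\|_{\ell^1/\mathbb{C}}$ rather than a final triangle inequality. The backward direction via $\|\overline{\mathbf{1}_F}\|_{\ell^1/\mathbb{C}}=\min(|F|,|\Gamma\setminus F|)$ matches the paper's, which restricts from the outset to $|F|\leq\frac12|\Gamma|$. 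The two arguments are equally rigorous; yours is slightly tighter at the last step, the paper's avoids any discussion of medians.

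Your closing remark about scalars is well taken and in fact applies equally to the paper: the published proof also tacitly assumes real representatives, since a genuinely complex-valued $f$ cannot be made positive by adding a constant. The sharp inequality can indeed fail over $\mathbb{C}$: on the triangle $K_3$ one has $\varepsilon=4$, while $f(j)=e^{2\pi ij/3}$ gives $\|d\overline f\|_{\ell^1}=6\sqrt3<12=\varepsilon\|\overline f\|_{\ell^1/\mathbb{C}}$. Splitting into real and imaginary parts recovers the estimate with the constant $\varepsilon/\sqrt2$, which is all that is needed later in the paper, where only boundedness below of $d$ by some positive constant matters.
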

\begin{proof}
Suppose $\|d\overline{f}\|_{\ell^1}\geq \varepsilon \|\overline{f}\|_{\ell^1/\mathbb{C}}$ for every $\overline{f}\in \ell^1(\Gamma)/\mathbb{C}$. Then in particular, for any subset $F\subset \Gamma$ such that $|F|\leq \frac{1}{2}|\Gamma|$ we have $\|d\overline{\chi_F}\|_{\ell^1}\geq \varepsilon \|\overline{\chi_F}\|_{\ell^1/\mathbb{C}}$, where $\chi_F$ denotes the characteristic function of $F$. It is clear that $\|d\overline{\chi_F}\|_1$ is equal to $|\partial F|$, the coboundary of the set $F$ (recall that we are taking our edges to be directed). Also, since $|F|\leq \frac{1}{2}|\Gamma|$, we have $$\sum_{\gamma \in \Gamma} |\chi_F(\gamma)+c|= \sum_{\gamma\in F}|1+c| +\sum_{\gamma \notin F}|c| \geq \sum_{\gamma\in F}1 - \sum_{\gamma\in F}|c|+\sum_{\gamma \notin F}|c|\geq  \sum_{\gamma\in F}1.$$ From this, we can see that the infimum over $c\in \mathbb{C}$ of $\sum_{\gamma \in \Gamma} |\chi_F(\gamma)+c|$ is achieved when $c=0$ and so we have $\|\overline{\chi_F}\|_{\ell^1/\mathbb{C}}=|F|$. Hence for every $F$ with $|F|\leq \frac{1}{2}|\Gamma|$, we have $|\partial F|\geq \varepsilon |F|$ and so $h(\Gamma)\geq \frac{\varepsilon}{2}$.

Suppose now that $h(\Gamma)$ is at least $\frac{\varepsilon}{2}$. 
Given $\overline{f}\in \ell^1(\Gamma)/\mathbb{C}$, pick an $f'\in \ell^1(\Gamma)$ which takes positive values on each element of $\Gamma$ and such that $\overline{f'}=\overline{f}$.
We can write $f'$ as $\sum a_j\chi_{F_j}$ for some nested collection of subsets $F_1 \subset F_2 \subset ...\subset F_n$ of $\Gamma$ and coefficients $a_j>0$. Now $\|d\overline{f}\|_{\ell^1}=\|d\overline{f'}\|_{\ell^1}$ is equal to $\sum a_j \|d\overline{\chi_{F_j}}\|_{\ell^1}$ since the $F_j$ are nested. Hence
$$
\|d \overline{f}\|_{\ell^1}\geq \sum_j a_j \|d \overline{\chi_{F_j}}\|_{\ell^1}=\sum_{j}a_j |\partial F_j|.$$
Let $F_j^c$ denote the complement of $F_j$ in $\Gamma$. Since $h(\Gamma)\geq \frac{\varepsilon}{2}$, when $|F_j|\leq \frac{1}{2}|\Gamma|$ we have $|\partial F_j|\geq \varepsilon |F_j|=\|\overline{\chi_{F_j}}\|_{\ell^1/\mathbb{C}}$, while for $|F_j|> \frac{1}{2}|\Gamma|$ we have
$$|\partial F_j|=|\partial F_j^c|\geq \varepsilon |F_j^c|=\varepsilon \|\overline{\chi_{F_j^c}}\|_{\ell^1/\mathbb{C}}=\varepsilon \|\overline{1-\chi_{F_j}}\|_{\ell^1/\mathbb{C}}=\varepsilon \|\overline{\chi_{F_j}}\|_{\ell^1/\mathbb{C}},$$
and so
$$
\|d \overline{f}\|_{\ell^1}\geq  \varepsilon\sum_{j}a_j  \|\overline{\chi_{F_j}}\|_{\ell^1/\mathbb{C}}
\geq \varepsilon \|\overline{\sum_j a_j \chi_{F_j}}\|_{\ell^1/\mathbb{C}}=
\varepsilon \|\overline{f}\|_{\ell^1/\mathbb{C}}.
$$
This completes the proof.
\end{proof}

The map $\ell^1(\Gamma)\to\ell^1_0(\Gamma)$ taking a function $f\in \ell^1(\Gamma)$ to $g=f-\frac{1}{|\Gamma|}\sum_{\beta\in\Gamma}f(\beta)$ has kernel $\C$, and hence induces an isomorphism from $\ell^1(\Gamma)/\mathbb{C}$ to $\ell^1_0(\Gamma)$. This map has norm at most $2$ since
$$\norm{g}_{\ell^1}=\sum_{\gamma\in\Gamma}|f(\gamma)-\frac{1}{|\Gamma|}\sum_{\beta\in\Gamma}f(\beta)|\leq \sum_{\gamma\in\Gamma}|f(\gamma)|+\sum_{\beta\in\Gamma}|f(\beta)|=2\norm{f}_{\ell^1}$$
while the inverse is given by the inclusion of $\ell^1_0(\Gamma)$ in $\ell^1(\Gamma)$ which has norm 1. Hence identifying $\ell^1(\Gamma)/\mathbb{C}$ with $\ell^1_0(\Gamma)$, the norms differ by a factor of at most 2.

We now move on to the definition of the cohomology which detects expander sequences. Let $\{\Gamma_i\}_{i\in \mathbb{N}}$ be a sequence of graphs. We denote by $\prod^\infty_{i\in\N}\ell^1(\Gamma_i)$ the space of bounded elements of the direct product. That is, $\prod^\infty_{i\in\N}\ell^1(\Gamma_i)$ is the space of functions from $\coprod_i\Gamma_i$ to $\C$, such that the $\sup$-$\ell^1$-norm
$$\norm{f}=\sup_{i\in\N} \norm{f|_{\Gamma_i}}_{\ell^1}$$
is finite. We define a summation map $\sigma_0:\prod^\infty_{i\in\N}\ell^1(\Gamma_i)\to \ell^\infty(\N)$ by $\sigma_0(f)(i)=\sum_{x\in \Gamma_i} f(x)$. Similarly $\prod^\infty_{i\in\N}\ell^1(E_i)$  is the space of functions on $\coprod_i E_i$ with finite $\sup$-$\ell^1$-norm, and we define $\sigma_1:\prod^\infty_{i\in\N}\ell^1(E_i)\to \ell^\infty(\N)$ by $\sigma_1(f)(i)=\sum_{x\in E_i} f(x)$.

We define
$$C^0(\{\Gamma_i\})=\ker(\sigma_0),\quad C^1(\{\Gamma_i\})=\ker(\sigma_1).$$
Note that $C^0(\{\Gamma_i\})$ consists of functions whose restriction to each $\Gamma_i$ lies in $\ell^1_0(\Gamma_i)$, and $C^1(\{\Gamma_i\})$ consists of functions whose restriction to each $E_i$ is in $\ell^1_0(E_i)$. Hence combining the coboundary maps on each component yields a coboundary map $d:C^0(\{\Gamma_i\})\to C^1(\{\Gamma_i\})$, and it is easy to see that this is bounded. In the spirit of \cite{BNW}, our cohomological description of the expander condition is given by completing this cochain complex.

\begin{definition}[{\cite[Def.\ 3.1]{BNW}}]
The \emph{quotient completion} of a pre-Fr\'echet space $V$ (a space equipped with a countable family of seminorms $\norm{\cdot}_j$) is the space $V_Q=\ell^\infty(\N,V)/c_0(\N,V)$ of bounded sequences in $V$ modulo sequences vanishing at infinity.
\end{definition}

For simplicity we suppose that the seminorms are monotonic, that is $\norm{\cdot}_i\leq \norm{\cdot}_j$ for $i<j$. We note the following useful property of this completion.

\begin{lemma}\label{quotient completion}
Let $T:V\to W$ be a bounded map from a normed spaced $V$ to a pre-Fr\'echet space $W$. Then $T$ is bounded below if and only if the induced map $T^Q:V_Q\to W_Q$ \cite[Prop.\ 3.3]{BNW} is injective.
\end{lemma}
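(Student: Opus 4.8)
The plan is to unwind both conditions into statements about sequences and then match them directly. Recall that an element of $V_Q$ is the class $[(v_n)]$ of a norm-bounded sequence $(v_n)$ in $V$, that the induced map satisfies $T^Q[(v_n)]=[(Tv_n)]$, and that $[(v_n)]=0$ in $V_Q$ exactly when $\norm{v_n}\to 0$; on the target side $[(Tv_n)]=0$ in $W_Q$ exactly when $\norm{Tv_n}_j\to 0$ for every seminorm index $j$. So $T^Q$ is injective if and only if, for every norm-bounded sequence $(v_n)$ in $V$, having $\norm{Tv_n}_j\to 0$ for all $j$ forces $\norm{v_n}\to 0$. On the other side I read ``$T$ bounded below'', in the pre-Fr\'echet setting, as the existence of $c>0$ and a single index $j_0$ with $\norm{Tv}_{j_0}\geq c\norm{v}$ for all $v\in V$ (by monotonicity of the seminorms this then holds for every $j\geq j_0$ as well); this is just the statement that $T$ is a topological embedding of $V$ into $W$.

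The forward implication is immediate. Suppose $T$ is bounded below, witnessed by $c$ and $j_0$, and let $(v_n)$ be norm-bounded with $\norm{Tv_n}_j\to 0$ for every $j$. Taking $j=j_0$ gives $c\norm{v_n}\leq\norm{Tv_n}_{j_0}\to 0$, hence $\norm{v_n}\to 0$ and $[(v_n)]=0$ in $V_Q$; so $\ker T^Q=0$.

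For the converse I would argue by contrapositive. If $T$ is not bounded below then for the choice $c=1/n$ and $j_0=n$ the lower bound fails, so we may pick $v_n\in V$ with $\norm{v_n}=1$ and $\norm{Tv_n}_n<1/n$ (take any nonzero vector witnessing the failure and rescale). The monotonicity of the seminorms now does the work: for each fixed $j$ and all $n\geq j$ we get $\norm{Tv_n}_j\leq\norm{Tv_n}_n<1/n$, so $(Tv_n)$ vanishes at infinity in $W$, while $(v_n)$, being of constant norm $1$, does not vanish at infinity in $V$. Thus $[(v_n)]$ is a nonzero element of $\ker T^Q$, so $T^Q$ is not injective.

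The one point that needs care — and the only real obstacle — is the reading of ``bounded below'' when the target is merely pre-Fr\'echet. The weaker reading $\sup_j\norm{Tv}_j\geq c\norm{v}$ will not do: it lets the index realising the lower bound depend on $v$, and then a diagonal sequence as in the converse can still be annihilated by $T^Q$ (one sees this already with $T$ the identity between suitable spaces of finitely supported sequences, one carrying the $\sup$ norm and the other the seminorms $\norm{w}_j=\max_{i\leq j}|w_i|$). So one genuinely needs a lower bound in a fixed seminorm; granting that, the argument is as above, and the monotonicity hypothesis on the seminorms is exactly what makes the contrapositive run.
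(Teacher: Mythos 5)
Your proof is correct and is essentially the same as the paper's: the forward direction is the immediate one, and the converse proceeds by negating "bounded below" (in the same sense of a fixed seminorm index, which is also the reading implicit in the paper's proof), extracting unit vectors $v_n$ with $\norm{Tv_n}_n<1/n$, and using monotonicity of the seminorms to see that $(Tv_n)$ vanishes at infinity while $(v_n)$ does not. Your side remark about why a fixed seminorm index is needed is a sensible clarification but does not change the argument.
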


\begin{proof}
One direction is obvious: if $T$ is bounded below then $T^Q$ is also bounded below hence injective. For the converse suppose that $T$ is not bounded below. This means that for each seminorm $\norm{\cdot}_{j,W}$ for $W$ and all $\varepsilon>0$ there exists $v$ in $V$ with $\norm{Tv}_{j,W}<\varepsilon\norm{v}_{V}$. Hence we can find a sequence $v_n\in V$ with $\norm{v_n}_{V}=1$ and $\norm{Tv_n}_{n,W}<\frac 1n$. As the sequence $v_n$ is bounded, it determines an element $v$ of $V_Q$. Its image under $T^Q$ is given by the sequence $Tv_n$, and since for $n\geq j$ we have $\norm{Tv_n}_{j,W}\leq \norm{Tv_n}_{n,W}<\frac 1n$, we have $Tv_n\in c_0(\N,W)$. Hence $T^Qv=0$, so $T^Q$ is not injective.
\end{proof}

We remark that the lemma is not true in general if $V$ is a pre-Fr\'echet space. Whilst for $T$ not bounded below there still exists a sequence $v_n$ not tending to zero such that $Tv_n\to 0$, there may be no \emph{bounded} sequence with these properties.

\medskip

We now give our cohomological description of the expander condition. Let $C^p_Q(\{\Gamma_i\})$ denote the quotient completion of $C^p(\{\Gamma_i\})$ for $p=0,1$. The extension of the coboundary map $d$ to the completion we again denote by $d$.

\begin{definition}
The \emph{Cheeger cohomology} of a sequence of graphs $\{\Gamma_i\}$, denoted $H_{h}^*(\{\Gamma_i\})$ is the cohomology of the cochain complex $(C^p_Q(\{\Gamma_i\}),d)$.
\end{definition}

We remark that $C^p_Q(\{\Gamma_i\})$ is the kernel of the induced map $\sigma_p^Q$, since the quotient completion preserves exactness (cf.\ \cite{BNW}).

\begin{thm}
Let $\{\Gamma_i\}_{i\in \mathbb{N}}$ be a sequence of finite graphs with bounded valency. Then $\{\Gamma_i\}$ is an expander sequence if and only if $H_{h}^0(\{\Gamma_i\})$ vanishes.
\end{thm}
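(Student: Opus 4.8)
The plan is to reduce the vanishing of $H_h^0$ to a \emph{bounded below} condition on the coboundary map, and then to recognise that condition as a uniform Cheeger inequality via Lemma~\ref{bounded below}. Since the cochain complex $(C^p_Q(\{\Gamma_i\}),d)$ is concentrated in degrees $0$ and $1$, its degree-$0$ cohomology is simply $\ker\bigl(d\colon C^0_Q(\{\Gamma_i\})\to C^1_Q(\{\Gamma_i\})\bigr)$. Thus $H_h^0(\{\Gamma_i\})$ vanishes if and only if $d\colon C^0_Q(\{\Gamma_i\})\to C^1_Q(\{\Gamma_i\})$ is injective; and since $d\colon C^0(\{\Gamma_i\})\to C^1(\{\Gamma_i\})$ is a bounded map of normed spaces, Lemma~\ref{quotient completion} makes this equivalent to $d\colon C^0(\{\Gamma_i\})\to C^1(\{\Gamma_i\})$ being bounded below, say by some $c>0$.

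Next I would unravel what being bounded below by $c$ means for the uncompleted map. As $C^0(\{\Gamma_i\})=\prod^\infty_{i\in\N}\ell^1_0(\Gamma_i)$ carries the $\sup$-$\ell^1$-norm and $d$ acts componentwise, this is equivalent to the maps $d\colon\ell^1_0(\Gamma_i)\to\ell^1_0(E_i)$ being bounded below by $c$, uniformly in $i$. One direction is immediate, since $\norm{df}=\sup_i\norm{d(f|_{\Gamma_i})}_{\ell^1}\geq\sup_i c\norm{f|_{\Gamma_i}}_{\ell^1}=c\norm{f}$. For the other, given $g\in\ell^1_0(\Gamma_i)$ I extend it by zero to an element $f\in C^0(\{\Gamma_i\})$, for which $\norm{f}=\norm{g}_{\ell^1}$ and $\norm{df}=\norm{dg}_{\ell^1}$; the uniform componentwise estimate then drops straight out of the estimate on $C^0(\{\Gamma_i\})$.

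The final step is to identify this uniform componentwise bound with the expander condition. Using the isomorphism $\ell^1(\Gamma_i)/\C\cong\ell^1_0(\Gamma_i)$ recorded above — under which $d$ on $\ell^1(\Gamma_i)/\C$ corresponds to $d$ on $\ell^1_0(\Gamma_i)$, since $d$ annihilates constants, and whose distortion is a factor of $2$ — a uniform bound $\norm{dg}_{\ell^1}\geq c\norm{g}_{\ell^1}$ on the graphs $\Gamma_i$ translates, at worst losing that factor of $2$, into a uniform bound $\norm{d\overline{f}}_{\ell^1}\geq\varepsilon\norm{\overline{f}}_{\ell^1/\C}$, which by Lemma~\ref{bounded below} is exactly the statement that $h(\Gamma_i)\geq\varepsilon/2$ for all $i$. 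Together with the bounded-valency hypothesis this says precisely that there exist $k$ and $\varepsilon$ for which every $\Gamma_i$ is a $(k,\varepsilon)$-expander, and running the constants through in the reverse order yields the converse implication. (One should bear in mind that an expander sequence is by definition also required to satisfy $|\Gamma_i|\to\infty$; this growth condition is invisible to $H_h^0$ and is to be assumed or verified separately.)

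I do not expect any genuine difficulty here once Lemmas~\ref{bounded below} and~\ref{quotient completion} are in hand. The only thing to watch is the bookkeeping: keeping the two independent factors of $2$ straight — the $\ell^1/\C$-versus-$\ell^1_0$ distortion, and the $\frac{1}{2}$ in the definition of the Cheeger constant — and checking at each translation that the estimate remains \emph{uniform in $i$}. It is precisely this uniformity over the sequence that the quotient completion is designed to detect, and that separates an expander sequence from one whose individual Cheeger constants are all positive but not bounded away from $0$.
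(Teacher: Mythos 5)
Your proposal is correct and follows essentially the same route as the paper: reduce vanishing of $H_h^0$ to injectivity of $d$ on the quotient completion, apply Lemma~\ref{quotient completion} to convert this to a bounded-below condition, pass to the componentwise uniform bound, and invoke Lemma~\ref{bounded below} together with the $\ell^1(\Gamma_i)/\C\cong\ell^1_0(\Gamma_i)$ identification. Your parenthetical remark that the condition $|\Gamma_i|\to\infty$ is invisible to $H_h^0$ and must be assumed separately is a fair observation that the paper's own proof does not make explicit.
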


\begin{proof}
Using Lemma \ref{bounded below} and the identification of $\ell^1(\Gamma_i)/\C$ with $\ell^1_0(\Gamma_i)$, the graphs $\{\Gamma_i\}$ form an expander sequence if and only if there exists $\varepsilon>0$ such that for each graph $\Gamma_i$ the coboundary map $d:\ell^1_0(\Gamma_i)\to \ell^1_0(E_i)$ is $\varepsilon$-bounded below. The individual coboundary maps are bounded below by a common $\varepsilon$ if and only if the map $d:C^0(\{\Gamma_i\})\to C^1(\{\Gamma_i\})$ is bounded below. By Lemma \ref{quotient completion} this is equivalent to injectivity of the coboundary map $d:C^0_Q(\{\Gamma_i\})\to C^1_Q(\{\Gamma_i\})$ on the completed complex. Hence the graphs $\{\Gamma_i\}$ form an expander sequence if and only if $H_{h}^0(\{\Gamma_i\})=0$.
\end{proof}

\section{Symmetrisation of property A}

In this section we recall one of the cohomological characterisations of property A from \cite{BNW}, and prove a symmetrisation result. Throughout this section, let $X$ denote a metric space. At certain points we will require $X$ to be a discrete, bounded geometry space, that is, for each $R>0$ there exists $N$ such that for all $x\in X$ the ball of radius $R$ about $x$ contains at most $N$ points.

\begin{definition}
An $X$-module is a triple $\V=(V,\norm{\cdot},\supp)$, where $V$ is a Banach space with norm $\norm{\cdot}$ and $\supp$ is a function from $V$ to the power set of $X$ such that
\begin{enumerate}
\item $\supp(v)=\emptyset$ if  $v=0$,
\item $\supp(v+w)\subseteq  \supp(v)\cup \supp(w)$ for every $v,w\in V$,
\item $\supp(\lambda v)=\supp(v)$ for every $v\in V$ and every $\lambda\not=0$, 
\item if $v_n$ is a sequence converging to $v$ then $\supp(v)\subseteq \overline{\bigcup\limits_n \supp(v_n)}$.
\end{enumerate}
\end{definition}

Let $\E^p(X,\V)$ denote the space of functions $\phi$ from $X^{p+1}$ to $V$ such that for all $R>0$ the function $\phi$ is bounded on
$$\Delta_R^{p+1}=\{(x_0,\dots x_p)\in X^{p+1}: d(x_i,x_j)\leq R\text{ for all }i,j \}$$
and there exists $S>0$ such that if $\mathbf{x}=(x_0,\dots x_p)\in \Delta_R^{p+1}$ then $\supp (\phi(\mathbf{x}))\subseteq B_S(x_i)$ for all $i$.

The space $\E^p(X,V)$ is equipped with the family of seminorms
$$\norm{\phi}_R=\sup\{\norm{\phi(x)}_V : x\in \Delta^{p+1}_R\}.$$
In \cite{BNW} this is denoted by $\E^{p,-1}(X,\V)$, being part of a bicomplex, however for simplicity we drop the $-1$ from our notation. We note that $\E^0(X,\V)$ is a normed space, since in dimension zero the norms are independent of $R$.

Let $\E^p_Q(X,\V)$ denote the quotient completion of $\E^p(X,\V)$. The usual formula $D\phi(x_0,\dots,x_{p+1})=\sum\limits_{i=0}^{p+1}(-1)^i \phi(x_0,\dots\hat{x}_i,\dots,x_{p+1})$ yields a coboundary map from $\E^p(X,\V)$ to $\E^{p+1}(X,\V)$, and the extension of $D$ to the completion we again denote by $D$.

The \emph{controlled cohomology} $H_Q^*(X,\V)$ is the cohomology of the completed complex $(\E^p_Q(X,\V),D)$.

By \cite[Theorem 7.2]{BNW} the space $X$ has property A if and only if the class $[\oneQ]\in H_Q^0(X,\C)$ is in the image of the map $\pi_*:H_Q^0(X,\ell^1(X))\to H_Q^0(X,\C)$ induced by the summation map $\pi:\ell^1(X)\to \C$. Here the module $\ell^1(X)$ is equipped with the usual support function, while all elements of $\C$ are defined to have empty support.

We now compare $\ell^1$ and $\ell^2$ coefficients. We define maps $\alpha:\ell^1(X)\to\ell^2(X)$ and $\beta:\ell^2(X)\to\ell^1(X)$ by
$$\alpha(\eta)(x)=\sqrt{|\eta(x)|}\text{ for $\eta\in\ell^1(X)$},\quad\beta(\xi)(x)=|\xi(x)|^2\text{ for $\xi\in\ell^2(X)$}.$$
Note that $\norm{\alpha(\eta)}^2_{\ell^2}=\norm{\eta}_{\ell^1}$ and $\norm{\beta(\xi)}_{\ell^1}=\norm{\xi}_{\ell^2}^2$.

\begin{lemma}\label{alphabeta}
Let $\alpha,\beta$ be defined as above. Then the compositions with $\alpha$ and $\beta$, yield maps $\E^p(X,\ell^1(X))\to \E^p(X,\ell^2(X))$ and $\E^p(X,\ell^2(X))\to \E^p(X,\ell^1(X))$ which extend in the natural way to maps $\alpha_*,\beta_*$ on the quotient completions. Moreover these maps take $0$-cocycles to $0$-cocycles.
\end{lemma}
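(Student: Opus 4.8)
The plan is to reduce the whole lemma to two elementary pointwise estimates and then push them mechanically through the definitions of $\E^p$, the quotient completion, and the coboundary $D$. First I would record that for non-negative reals $a,b$ one has $|\sqrt a-\sqrt b|^2=a+b-2\sqrt{ab}\leq a+b-2\min(a,b)=|a-b|$, and that for complex numbers $z,w$ one has $\bigl||z|^2-|w|^2\bigr|=\bigl||z|-|w|\bigr|\bigl(|z|+|w|\bigr)\leq|z-w|\bigl(|z|+|w|\bigr)$. Summing the first over $x\in X$ gives $\norm{\alpha(\eta)-\alpha(\eta')}_{\ell^2}^2\leq\norm{\eta-\eta'}_{\ell^1}$ for $\eta,\eta'\in\ell^1(X)$, and summing the second together with the Cauchy--Schwarz inequality gives $\norm{\beta(\xi)-\beta(\xi')}_{\ell^1}\leq\norm{\xi-\xi'}_{\ell^2}\bigl(\norm{\xi}_{\ell^2}+\norm{\xi'}_{\ell^2}\bigr)$ for $\xi,\xi'\in\ell^2(X)$; specialising $\eta'=0$ and $\xi'=0$ recovers the norm identities noted just before the lemma. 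I would also observe that $\supp(\alpha(\eta))=\supp(\eta)$ and $\supp(\beta(\xi))=\supp(\xi)$, since $\sqrt{|\cdot|}$ and $|\cdot|^2$ vanish precisely where their argument does.

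Next I would check that post-composition with $\alpha$, resp.\ $\beta$, really lands in the right cochain space. Given $\phi\in\E^p(X,\ell^1(X))$, the function $\alpha_*\phi\colon\x\mapsto\alpha(\phi(\x))$ satisfies $\norm{(\alpha_*\phi)(\x)}_{\ell^2}=\norm{\phi(\x)}_{\ell^1}^{1/2}$, so it is bounded on each $\Delta^{p+1}_R$ with $\norm{\alpha_*\phi}_R\leq\norm{\phi}_R^{1/2}$; and since $\supp\bigl((\alpha_*\phi)(\x)\bigr)=\supp(\phi(\x))$, the support-control condition in the definition of $\E^p$ is inherited verbatim. The same argument handles $\beta_*$, with $\norm{\beta_*\phi}_R\leq\norm{\phi}_R^2$. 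Thus $\alpha_*\colon\E^p(X,\ell^1(X))\to\E^p(X,\ell^2(X))$ and $\beta_*\colon\E^p(X,\ell^2(X))\to\E^p(X,\ell^1(X))$ are well defined.

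To extend these to the quotient completions I would take a representing sequence $(\phi_n)$, bounded in every seminorm $\norm{\cdot}_R$; by the bounds just obtained $(\alpha_*\phi_n)$ and $(\beta_*\phi_n)$ are again bounded in every seminorm and hence represent elements of the respective $\E^p_Q$. Well-definedness then reduces to sending a null sequence to a null sequence: if $\norm{\phi_n-\psi_n}_R\to 0$ for all $R$, the first estimate gives $\norm{\alpha_*\phi_n-\alpha_*\psi_n}_R\leq\norm{\phi_n-\psi_n}_R^{1/2}\to 0$, and the second gives $\norm{\beta_*\phi_n-\beta_*\psi_n}_R\leq\norm{\phi_n-\psi_n}_R\bigl(\norm{\phi_n}_R+\norm{\psi_n}_R\bigr)\to 0$, the trailing factor being bounded. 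Finally, for the cocycle claim: a $0$-cocycle of $\E^0_Q(X,\V)$ is represented by a bounded sequence $\phi_n\colon X\to V$ with $\norm{D\phi_n}_R=\sup_{d(x_0,x_1)\leq R}\norm{\phi_n(x_1)-\phi_n(x_0)}\to 0$ for each $R$. Applying the two estimates to $\phi_n(x_1)-\phi_n(x_0)$ (noting $D(\alpha_*\phi_n)(x_0,x_1)=\alpha(\phi_n(x_1))-\alpha(\phi_n(x_0))$, so one cannot commute $D$ past $\alpha_*$ but must estimate directly) yields $\norm{D(\alpha_*\phi_n)}_R\leq\norm{D\phi_n}_R^{1/2}\to 0$ and $\norm{D(\beta_*\phi_n)}_R\leq 2\norm{\phi_n}\,\norm{D\phi_n}_R\to 0$, where $\norm{\phi_n}$ is the ($R$-independent) norm of $\phi_n$ in $\E^0$, bounded in $n$. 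Hence $\alpha_*$ and $\beta_*$ take $0$-cocycles to $0$-cocycles.

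The one point that is not purely formal --- and the one I would single out as the main obstacle --- is precisely that $\alpha$ and $\beta$ are \emph{not linear}, so neither well-definedness on the quotient completion nor preservation of cocycles is automatic; both rest on the continuity estimates of the first step. Moreover the estimate for $\beta$ is only \emph{locally} Lipschitz: $\norm{\beta_*\phi_n-\beta_*\psi_n}_R$ is controlled by $\norm{\phi_n-\psi_n}_R$ only after multiplication by $\norm{\phi_n}_R+\norm{\psi_n}_R$, so the uniform boundedness of the representing sequences is genuinely used. I would make sure this is clearly visible, since it explains why the quotient completion (bounded sequences modulo null sequences) is the right framework for the construction.
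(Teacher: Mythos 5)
Your proposal is correct and follows essentially the same route as the paper: both rest on the two continuity estimates $\norm{\alpha(\eta)-\alpha(\eta')}^2_{\ell^2}\leq\norm{\eta-\eta'}_{\ell^1}$ and $\norm{\beta(\xi)-\beta(\xi')}_{\ell^1}\leq\norm{\xi-\xi'}_{\ell^2}(\norm{\xi}_{\ell^2}+\norm{\xi'}_{\ell^2})$, combined with preservation of supports and boundedness of representing sequences. Your derivation of the first estimate via $a+b-2\sqrt{ab}\leq|a-b|$ is a slightly cleaner variant of the paper's chain of square-root inequalities, but the argument is otherwise the same.
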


\begin{proof}
The identity $\norm{\alpha(\eta)}^2_{\ell^2}=\norm{\eta}_{\ell^1}$ shows that for $\phi_n$ a bounded sequence in $\E^p(X,\ell^1(X))$, the sequence $\alpha\circ\phi_n\in\E^p(X,\ell^2(X))$ is also bounded. Hence, as composition with $\alpha$ preserves supports, $\alpha\circ\phi_n$ defines an element in the quotient completion. We note that the inequalities
$$|\sqrt{|\eta(z)|}-\sqrt{|\eta'(z)|}|\leq \sqrt{|\eta(z)|-|\eta'(z)|}\leq \sqrt{|\eta(z)-\eta'(z)|}$$
imply that $\norm{\alpha(\eta)-\alpha(\eta')}^2_{\ell^2}\leq \norm{\eta-\eta'}_{\ell^1}$. It follows that if $\phi_n'$ is another bounded sequence in $\E^p(X,\ell^1(X))$ such that $\norm{\phi_n-\phi_n'}_R\to 0$, then $\norm{\alpha\circ\phi_n-\alpha\circ\phi'_n}_R\to 0$, and so the element of $\EQ^p(X,\ell^2(X))$ obtained by composition with $\alpha$ is independent of the choice of representative of element of $\EQ^p(X,\ell^1(X))$. Thus we have a well-defined map $\alpha_*:\EQ^p(X,\ell^1(X))\to \EQ^p(X,\ell^2(X))$.

The estimate $\norm{\alpha(\eta)-\alpha(\eta')}^2_{\ell^2}\leq \norm{\eta-\eta'}_{\ell^1}$ also yields
\begin{align*}
\norm{D\alpha(\phi_n)(x_0,x_1)}^2_{\ell^2}&=\norm{\alpha(\phi_n(x_1))-\alpha(\phi_n(x_0))}^2_{\ell^2}\\
&\leq \norm{\phi_n(x_1)-\phi_n(x_0)}_{\ell^1}\\
&=\norm{D\phi_n(x_0,x_1)}_{\ell^1}
\end{align*}
for $\phi_n$ a bounded sequence in $\EQ^0(X,\ell^1(X))$. Hence $\alpha_*$ takes 0-cocycles to 0-cocycles.

The argument for $\beta_*$ is similar, using the identity $\norm{\beta(\xi)}_{\ell^1}=\norm{\xi}_{\ell^2}^2$ and the estimate $\norm{\beta(\xi)-\beta(\xi')}_{\ell^1}\leq \norm{\xi-\xi'}_{\ell^2}(\norm{\xi}_{\ell^2}+\norm{\xi'}_{\ell^2})$ which follows from
$$
\bigl||\xi(x)|^2-|\xi'(x)|^2\bigr|=\bigl||\xi(x)|-|\xi'(x)|\bigr|\bigl(|\xi(x)|+|\xi'(x)|\bigr)\leq\bigl|\xi(x)-\xi'(x)\bigr|\bigl(|\xi(x)|+|\xi'(x)|\bigr)
$$
by the Cauchy-Schwartz inequality.
\end{proof}

We now prove a symmetrisation result. Note that we will omit norm subscripts where this does not cause confusion.

For an element $\phi$ of $\E^0_Q(X,\ell^1(X))$ or $\E^0_Q(X,\ell^2(X))$ we say $\phi$ is \emph{symmetric} if it can be represented by a sequence $\phi_n$ such that $\phi(x)(z)$ is real and $\phi_n(x)(z)=\phi_n(z)(x)$ for all $x,z\in X$. We say that $\phi$ is \emph{everywhere unital} if $\lim_{n\to\infty} \norm{\phi_n(x)}=1$ for all $x\in X$ (note that this limit is independent of the choice of representative sequence).

\begin{thm}\label{symmetrisation}
Let $X$ be a bounded geometry metric space. The following are equivalent:
\begin{enumerate}
\item $X$ has property A;
\item There is a cocycle $\phi\in\E^0_Q(X,\ell^1(X))$ such that $\pi_*(\phi)=\oneQ$;
\item There is a symmetric cocycle $\phi\in\E^0_Q(X,\ell^1(X))$ such that $\pi_*(\phi)=\oneQ$;
\item There is a symmetric cocycle $\psi\in\E^0_Q(X,\ell^2(X))$ such that $\psi$ everywhere unital.
\end{enumerate}
\end{thm}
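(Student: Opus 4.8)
The plan is to prove the cycle of implications $(1)\Rightarrow(2)\Rightarrow(3)\Rightarrow(4)\Rightarrow(1)$. The implication $(1)\Rightarrow(2)$ is immediate from \cite[Theorem 7.2]{BNW} as recalled above: property A says exactly that $[\oneQ]$ lies in the image of $\pi_*$, so there is a $0$-cocycle $\phi\in\E^0_Q(X,\ell^1(X))$ with $\pi_*(\phi)=\oneQ$. The implication $(2)\Rightarrow(3)$ is the symmetrisation step and will be the main content. Given $\phi$ represented by $\phi_n$, I would first pass to $|\phi_n(x)|$ (pointwise absolute value in $\ell^1(X)$); since $\pi_*(\phi)=\oneQ$ forces $\norm{\phi_n(x)}_{\ell^1}\to 1$ uniformly on $X$, replacing $\phi_n(x)$ by $|\phi_n(x)|$ changes the $\ell^1$-norm by a vanishing amount and does not increase $\norm{D\phi_n(x_0,x_1)}_{\ell^1}$ by the triangle inequality $\bigl||a|-|b|\bigr|\le|a-b|$, so we may assume $\phi_n(x)\ge 0$ and real. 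To symmetrise, I would use the map $\alpha$ of Lemma \ref{alphabeta} to pass to $\ell^2$, take $\psi_n(x)=\alpha(\phi_n(x))/\norm{\alpha(\phi_n(x))}_{\ell^2}$ (legitimate since the norms tend to $1$), symmetrise the kernel by the standard trick $\widetilde\psi_n(x)(z):=\bigl(\psi_n(x)(z)^2+\psi_n(z)(x)^2\bigr)^{1/2}$ — wait, this is not bilinear; instead I symmetrise at the $\ell^1$ level using $\sqrt{\phi_n(x)(z)\,\phi_n(z)(x)}$. Concretely, set $\theta_n(x)(z)=\sqrt{\phi_n(x)(z)}\cdot\sqrt{\phi_n(z)(x)}$ as an element of $\ell^1(X)$ in the variable $z$; this is visibly symmetric in $x,z$, non-negative, and the Cauchy–Schwarz-type estimate $\sum_z\bigl|\sqrt{\phi_n(x)(z)\phi_n(z)(x)}-\sqrt{\phi_n(x')(z)\phi_n(z)(x')}\bigr|$ can be bounded, after splitting the difference, by terms of the form $\norm{\phi_n(x)-\phi_n(x')}_{\ell^1}^{1/2}$ times bounded factors, using the inequalities already assembled in the proof of Lemma \ref{alphabeta}. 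Hence $\theta_n$ is a bounded sequence, defines an element of $\E^0_Q(X,\ell^1(X))$, is a $0$-cocycle, is symmetric, and $\pi_*(\theta)=\lim_n\sum_z\theta_n(x)(z)=1$ (the last by Cauchy–Schwarz together with $\sum_z\phi_n(x)(z)\to1$ and the near-symmetry forced by the cocycle condition controlling $\norm{\phi_n(x)-\phi_n(z)}$ for $x,z$ close — for the summation one also needs a uniformity argument). After rescaling by $\pi_*(\theta)^{-1}$ (a sequence tending to $1$) we get $(3)$.

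For $(3)\Rightarrow(4)$ I would apply $\alpha_*$ from Lemma \ref{alphabeta} to the symmetric cocycle $\phi$, obtaining a $0$-cocycle $\psi=\alpha_*(\phi)\in\E^0_Q(X,\ell^2(X))$; composition with $\alpha$ preserves the symmetry of the kernel (it acts pointwise), and $\norm{\psi_n(x)}_{\ell^2}^2=\norm{\phi_n(x)}_{\ell^1}\to\pi_*(\phi)(x)=1$, so $\psi$ is everywhere unital. For $(4)\Rightarrow(1)$ I would apply $\beta_*$ to the everywhere-unital symmetric $\ell^2$-cocycle $\psi$, getting a $0$-cocycle $\beta_*(\psi)\in\E^0_Q(X,\ell^1(X))$ with $\norm{\beta_*(\psi)_n(x)}_{\ell^1}=\norm{\psi_n(x)}_{\ell^2}^2\to1$, hence $\pi_*(\beta_*(\psi))=\oneQ$, and then invoke \cite[Theorem 7.2]{BNW} again to conclude property A. (The symmetry is not needed for this last implication, but it is harmless to carry along.)

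The main obstacle is the symmetrisation $(2)\Rightarrow(3)$, and within it the verification that $\theta_n$ genuinely lies in $\E^0_Q$ — i.e.\ that the support condition survives (it does, since $\supp\theta_n(x)\subseteq\supp\phi_n(x)\cap\supp\phi_n(\cdot)(x)\subseteq B_S(x)$, using that for $d(x,z)$ large the product vanishes once $S$ is exceeded, which requires $\phi_n(x)(z)$ or $\phi_n(z)(x)$ to vanish — here bounded geometry and the support bound for $\phi$ are used) — and that $\pi_*(\theta)=\oneQ$ rather than merely $\pi_*(\theta)\le\oneQ$ pointwise. The lower bound on $\sum_z\theta_n(x)(z)$ is the delicate point: by Cauchy–Schwarz $\sum_z\sqrt{\phi_n(x)(z)\phi_n(z)(x)}\le\bigl(\sum_z\phi_n(x)(z)\bigr)^{1/2}\bigl(\sum_z\phi_n(z)(x)\bigr)^{1/2}$, which is the wrong direction, so instead I would estimate $1-\sum_z\sqrt{\phi_n(x)(z)\phi_n(z)(x)}$ by comparing with $\tfrac12\sum_z\bigl(\sqrt{\phi_n(x)(z)}-\sqrt{\phi_n(z)(x)}\bigr)^2$ plus a vanishing term, and control $\sum_z\bigl(\sqrt{\phi_n(x)(z)}-\sqrt{\phi_n(z)(x)}\bigr)^2$ — but this last sum is not obviously small pointwise in $x$ from the cocycle condition alone, which only gives smallness of $\norm{\phi_n(x)-\phi_n(x')}_{\ell^1}$ for $d(x,x')$ bounded, not of the discrepancy between $\phi_n(x)(\cdot)$ and $\phi_n(\cdot)(x)$. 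Resolving this will require using the cocycle/controlled structure more carefully, presumably by first replacing $\phi$ by an average over a bounded neighbourhood to make it "approximately symmetric" before applying the square-root symmetrisation, or by arguing that the everywhere-unital condition in $(4)$ is the right target precisely because it only demands $\norm{\psi_n(x)}\to1$ pointwise and not uniformly.
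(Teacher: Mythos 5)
Your outer framework---the equivalence $(1)\Leftrightarrow(2)$ from \cite[Theorem 7.2]{BNW}, and the passage between $\ell^1$ and $\ell^2$ coefficients via $\alpha_*$ and $\beta_*$ of Lemma \ref{alphabeta}---matches the paper, and your $(4)\Rightarrow(1)$ is essentially the paper's $(4)\Rightarrow(3)\Rightarrow(2)$. But the heart of the theorem is the symmetrisation step, and there your proposal has a genuine gap which you partly acknowledge but do not close. The geometric-mean symmetrisation $\theta_n(x)(z)=\sqrt{\phi_n(x)(z)\,\phi_n(z)(x)}$ fails: nothing in the hypotheses controls the discrepancy between $\phi_n(x)(z)$ and $\phi_n(z)(x)$ (the cocycle condition only compares $\phi_n(x)$ with $\phi_n(x')$ for $d(x,x')$ bounded, and says nothing about transposing the two arguments), and the total mass $\sum_z\theta_n(x)(z)$ can collapse to $0$. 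Concretely, on $X=\mathbb{Z}$ let $\phi_n(x)$ be the uniform probability measure on $\{x,x+1,\dots,x+n\}$; this is a cocycle with $\pi_*(\phi)=\oneQ$, yet $\phi_n(x)(z)\phi_n(z)(x)\neq 0$ only for $z=x$, so $\sum_z\theta_n(x)(z)=\tfrac{1}{n+1}\to 0$. No pointwise recombination of $\phi_n(x)(z)$ and $\phi_n(z)(x)$ (geometric or arithmetic mean) preserves the normalisation in general, and your suggested repairs (averaging over a bounded neighbourhood, etc.) are not carried out.

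The paper's solution is operator-theoretic, and it is the idea your proposal is missing. One first passes to $\ell^2$ and normalises, setting $\theta_n(x)=\alpha(\phi_n(x))/\norm{\alpha(\phi_n(x))}$ (legitimate since $\norm{\alpha(\phi_n(x))}^2=\norm{\phi_n(x)}_{\ell^1}\geq 1$), and then encodes $\theta_n$ as an operator $T_n$ on $\ell^2(X)$ with $T_n\delta_x=\theta_n(x)$. The controlled supports and bounded geometry place $T_n$ in the uniform Roe algebra, so $T_n'=(T_n^*T_n)^{1/2}$ also lies there and can be approximated in norm by a self-adjoint finite-propagation operator $T_n''$. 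The key identity $\norm{T_n'\xi}=\norm{T_n\xi}$ shows that $\psi_n(x)=T_n''\delta_x$ satisfies $\norm{\psi_n(x)}\to 1$ and $D\psi_n\to 0$, while self-adjointness (plus reality, arranged by replacing $T_n''$ with its real part) gives the symmetry $\psi_n(x)(z)=\psi_n(z)(x)$, and finite propagation restores the support condition. This proves $(2)\Rightarrow(4)$ directly; $(4)\Rightarrow(3)$ then follows by applying $\beta_*$, which preserves symmetry and, by non-negativity, converts the everywhere-unital condition into $\pi_*\phi=\oneQ$. Note also that your proposed order $(2)\Rightarrow(3)\Rightarrow(4)$ hits a secondary snag at $(3)\Rightarrow(4)$: renormalising $\alpha(\phi_n(x))$ by $\norm{\alpha(\phi_n(x))}$ destroys symmetry unless $\norm{\phi_n(x)}_{\ell^1}$ is independent of $x$, which again points to the need for the square-root-of-the-operator construction rather than a pointwise formula.
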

\begin{proof}
The equivalence of (1) and (2) is \cite[Theorem 7.2]{BNW}.

First we prove (2) $\implies$ (4). Suppose there exists a cocycle $\phi\in\E^0_Q(X,\ell^1(X))$ such that $\pi_*(\phi)=\oneQ$. We consider $\alpha_*\phi$. Choosing a representative sequence $\phi_n$ for $\phi$ we note that $\norm{\alpha(\phi_n(x))}^2=\norm{\phi_n(x)}\geq 1$ for all $x$ since $\pi(\phi(x))=1$. Let $\theta_n(x)=\frac1{\norm{\alpha(\phi_n(x))}}\alpha(\phi_n(x))$. We know that $\alpha_*\phi$ is a cocycle. The estimate
\begin{align*}
\norm{\frac1{\norm{\xi}}\xi-\frac1{\norm{\xi'}}\xi'}&\leq\frac{\norm{\xi-\xi'}}{\norm{\xi}}+\left|\frac1{\norm{\xi}}-\frac1{\norm{\xi'}}\right|\norm{\xi'}=\frac{\norm{\xi-\xi'}+|\norm{\xi'}-\norm{\xi}|}{\norm{\xi}}\\
&\leq 2\norm{\xi-\xi'}
\end{align*}
for $\xi\in \ell^2(X)$ with $\norm{\xi}\geq 1$, shows that $D\theta_n\to 0$, i.e.\ $\theta$ again determines a cocycle.

Consider the operators $T_n:\ell^2(X)\to \ell^2(X)$ defined by $(T_n\xi)(y)=\sum\limits_{x\in X}\theta_n(x)(y)\xi(x)$. The support condition on $\theta_n$ provides an $S_n>0$ such that $\theta_n(x)$ is supported in $B_{S_n}(x)$, and bounded geometry gives a bound $N_n$ on the size of these balls, hence the operators $T_n$ are bounded. The support condition also shows that these operators have finite propagation, and thus they are elements of the uniform Roe algebra of $X$. Consider $T'_n=(T^*_nT_n)^{1/2}$. This lies in the uniform Roe algebra since $T_n$ does, and hence for each $n$ we can find another self-adjoint operator $T_n''$ with $T_n''$ of finite propagation and $\norm{T_n''-T_n'}\to 0$ as $n\to\infty$.

Define $\psi_n(x)= T''_n (\delta_x)$. We note that for $\xi\in \ell^2(X)$ we have
$$\langle T_n\xi,T_n\xi\rangle=\langle T_n^*T_n\xi,\xi\rangle=\langle (T_n')^2\xi,\xi\rangle=\langle T_n'\xi,T_n'\xi\rangle$$
so $\norm{T_n\xi}=\norm{T_n'\xi}$ for all $\xi$. We have $\norm{T'_n (\delta_x)}=\norm{T_n (\delta_x)}=\norm{\theta_n(x)}=1$. Hence $\norm{\psi_n(x)}=\norm{T''_n (\delta_x)}\to 1$ as $n\to\infty$. Finite propagation of $T''_n$ provides the support condition for $\psi_n$ and so $\psi_n$ gives an everywhere unital element of $\EQ^0(X,\ell^2(X))$. To see that $\psi$ is a cocycle note that $\norm{D\theta_n(x_0,x_1)}=\norm{T_n(\delta_{x_1}-\delta_{x_0})}=\norm{T'_n(\delta_{x_1}-\delta_{x_0})}$ and $\norm{D\psi_n(x_0,x_1)}=\norm{T''_n(\delta_{x_1}-\delta_{x_0})}$. As $T''_n-T'_n\to 0$, $D\theta_n\to 0$ implies $D\psi_n\to 0$.

As $T''_n$ is self-adjoint, we have $\psi_n(x)(z)=\langle T''_n \delta_x, \delta_z\rangle=\langle \delta_x, T''_n\delta_z\rangle=\overline{\psi_n(z)(x)}$. To make $\psi_n$ symmetric it therefore suffices to ensure that $\psi_n(x)(z)$ is real. For an operator $T:\ell^2(X)\to \ell^2(X)$, let $\overline{T}$ denote the operator defined by $\overline{T}\xi=\overline{T\overline{\xi}}$ where $\overline{\xi}$ denotes the entry-wise complex conjugate of $\xi$. As $\theta_n$ is real, it follows that $\overline{T_n}=T_n$, and hence $\overline{T_n^*T_n}=\overline{T_n}^*\overline{T_n}=T_n^*T_n$, hence as $T_n^*T_n=T_n'^2$ we have $\overline{T'_n}^2=\overline{T_n'^2}=T_n^*T_n$. Since the positive square-root $T_n'$ of $T_n^*T_n$ is unique we have $\overline{T_n'}=T_n'$. Without loss of generality we may assume that $\overline{T_n''}=T_n''$, since replacing $T_n''$ with its real part $\frac 12(T_n''+\overline{T_n''})$ reduces the distance from $T_n'$. Hence we have $\psi_n(x)(z)=\langle T''_n \delta_x, \delta_z\rangle$ real, so we have proved (4).

(4) $\implies$ (3) is immediate from Lemma \ref{alphabeta}: given $\psi$, we take $\phi=\beta_*\psi$. Symmetry is preserved and as $\psi$ is everywhere unital, the same holds for $\phi$. So, as $\phi$ is non-negative, we have $\pi_*\phi=\oneQ$.

(3) $\implies$ (2) is trivial.
\end{proof}

\section{Expanders do not have property A}

Let $\GAMMA$ be a disjoint union of graphs $\{\Gamma_i\}_{i\in \mathbb{N}}$ equipped with a proper metric such that the restriction to each component $\Gamma_i$ is the graph metric on $\Gamma_i$, and such that the distance between $\Gamma_i$ and its complement $\Gamma_i^c$ tends to infinity as $i\rightarrow \infty$. If $\GAMMA$ has property A then there is a cocycle $\phi\in\E^0_Q(\GAMMA,\ell^1(\GAMMA))$ with $\pi_*(\phi)=\oneQ$, while if $\{\Gamma_i\}$ is an expander sequence then $H_{h}^0(\{\Gamma_i\})$ is zero. We will show that these two cohomological conditions are contradictory. This implies that expanders cannot have property A.

\begin{thm}\label{homology A implies not exp}
Let $\GAMMA$ be a disjoint union of graphs $\Gamma_i$ with bounded valency, such that $d(\Gamma_i,\Gamma_i^c)\to \infty$ and $|\Gamma_i|\to \infty$ as $i\to\infty$. If there exists a cocycle $\phi\in\E^0_Q(\GAMMA,\ell^1(\GAMMA))$ such that $\pi_*(\phi)=\oneQ$ then $H_{h}^0(\{\Gamma_i\})$ is non-zero.
\end{thm}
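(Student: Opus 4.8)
The plan is to take the given cocycle $\phi\in\E^0_Q(\GAMMA,\ell^1(\GAMMA))$ with $\pi_*(\phi)=\oneQ$ and build from it an element of $C^0_Q(\{\Gamma_i\})$ which is a cocycle for the Cheeger complex (i.e. lies in the kernel of $d$) but is not a coboundary (is nonzero in $H^0_h$), thereby contradicting the vanishing of $H^0_h(\{\Gamma_i\})$. First I would use Theorem \ref{symmetrisation} to replace $\phi$ by a symmetric cocycle, or equivalently pass to $\psi\in\E^0_Q(\GAMMA,\ell^2(\GAMMA))$ that is symmetric and everywhere unital; working with $\ell^2$ makes the coboundary estimates cleaner. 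Choose a representative sequence $\psi_n$. The key geometric input is that $d(\Gamma_i,\Gamma_i^c)\to\infty$: this means that for any fixed $R$, once $i$ is large enough, no point of $\Gamma_i$ is within distance $R$ of any other component, so the support condition forces $\psi_n(x)$ to be supported inside $\Gamma_i$ whenever $x\in\Gamma_i$ and $n$ is large. Thus $\psi_n$ eventually respects the decomposition, and for each large $i$ we get a genuine element of $\prod^\infty\ell^2(\Gamma_i)$ (or after applying $\beta_*$, of $\prod^\infty\ell^1(\Gamma_i)$).

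Next I would produce a scalar-valued function on $\coprod_i\Gamma_i$. The natural candidate is $g_n(x)=\sum_{z}\psi_n(x)(z)$ — but since $\psi$ is symmetric (essentially a self-adjoint finite-propagation operator applied to $\delta_x$) it is more robust to take something like $g_n(x)(z)$ regarded as a kernel, or to symmetrise differently. Concretely, from the $\ell^1$ symmetric cocycle $\phi=\beta_*\psi$ with $\pi_*\phi=\oneQ$, define $f_n$ on $\Gamma_i$ by $f_n(x)=\sum_{z\in\Gamma_i}\phi_n(x)(z)\,w(z)$ for a suitable weight, or more simply exploit that $D\phi_n(x_0,x_1)=\phi_n(x_1)-\phi_n(x_0)\to 0$ in $\ell^1$ uniformly on $\Delta_R$ for each $R$. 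Restricting to adjacent vertices $x_0\sim x_1$ in $\Gamma_i$ (so $R=1$), this says $\|\phi_n(x_1)-\phi_n(x_0)\|_{\ell^1}\to 0$ uniformly over all edges of all $\Gamma_i$. Summing against the functional that is constant $1$ on $\Gamma_i$ shows the scalar function $h_n(x):=\pi(\phi_n(x))=$ (sum of $\phi_n(x)$) has $|h_n(x_1)-h_n(x_0)|\to 0$ along edges — but $h_n(x)\to 1$, which is useless directly; the content must come from a vector-valued quantity with nontrivial $d$-behaviour. The right move is: fix a reference vertex-functional or rather view $\phi_n$ itself, component by component, centred so its sum is zero. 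Precisely, set $F_n^{(i)}=\phi_n(x_0^{(i)})\in\ell^1(\Gamma_i)$ for a chosen basepoint $x_0^{(i)}\in\Gamma_i$ and let $c_n^{(i)}=F_n^{(i)}-\frac{1}{|\Gamma_i|}\sum F_n^{(i)}\in\ell^1_0(\Gamma_i)$; assembling these gives $c_n\in C^0(\{\Gamma_i\})$ of bounded norm, hence an element $c\in C^0_Q$. The cocycle condition (path-connectedness within $\Gamma_i$, diameter bounds are not available — but we do not need $dc=0$; rather we compute $dc$ and show it is small/trivial in $H^1_h$) and the unitality $\pi(F_n^{(i)})=1$ should show $c$ is not a $d$-coboundary, because any $d$-primitive would have to be a bounded sequence of scalars (elements of $\C\subset\ell^1(\Gamma_i)/\C$ are zero), forcing $c=0$ in $C^0_Q$, contradicting $\|c_n^{(i)}\|\approx 1$ coming from $\pi(F_n^{(i)})=1$ and bounded geometry.

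The main obstacle — and the step I would spend the most care on — is showing that the resulting class in $H^0_h(\{\Gamma_i\})$ is genuinely \emph{nonzero}, i.e. that $c$ is not a coboundary, equivalently not in the image of nothing (since $C^{-1}=0$, every element of $\ker d$ in degree $0$ represents a cohomology class, so really I must show $dc=0$ and $c\neq 0$ in $C^0_Q$). Showing $dc=0$ amounts to: $d c_n^{(i)}(e)=F_n^{(i)}(e^+)-F_n^{(i)}(e^-)$, and I must argue this $\to 0$ in $\sup$-$\ell^1$ over all edges $e$ of all $\Gamma_i$. But $F_n^{(i)}$ is $\phi_n$ evaluated at a \emph{single fixed} basepoint, so $d c_n^{(i)}$ involves differences of $\phi_n(x_0^{(i)})(y)$ over adjacent $y$, which is \emph{not} controlled by the cocycle condition (that controls $\phi_n(x)-\phi_n(x')$, not the internal structure of $\phi_n(x)$). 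So the basepoint construction is wrong; the correct object must be built so that its $d$ is exactly an $\E$-coboundary-type difference. The fix: take the kernel $k_n(e)=\phi_n(e^+)(\cdot)-\phi_n(e^-)(\cdot)$ — no. The genuine route, which I would pursue, is to dualise: the hypothesis $\|d\overline f\|_{\ell^1}\geq\varepsilon\|\overline f\|_{\ell^1/\C}$ failing (which is what $H^0_h\neq 0$ gives via the Theorem of Section 2) is equivalent to existence of a bounded sequence $\overline{f_n^{(i)}}$ with $\|d\overline{f_n^{(i)}}\|\to 0$ but $\|\overline{f_n^{(i)}}\|_{\ell^1/\C}$ bounded away from $0$; and such a sequence is precisely what $\phi$ provides once we note that for $x,x'$ in the \emph{same} $\Gamma_i$, connected by an edge, $\phi_n(x)-\phi_n(x')\to 0$, so $\phi_n$ restricted to $\Gamma_i$, when we form the scalar $\overline{g_n^{(i)}}\in\ell^1(\Gamma_i)/\C$ by $g_n^{(i)}(x)=\|\phi_n(x)-\phi_n(x_0^{(i)})\|_{\ell^1}$ or better $g_n^{(i)}(x)=\langle\psi_n(x),\psi_n(x_0^{(i)})\rangle$ — these are scalar functions whose edge-differences are controlled by $D\psi_n\to 0$, giving $dg_n^{(i)}\to 0$, while unitality and a noncollapsing argument (using $|\Gamma_i|\to\infty$ so the constant-function direction cannot absorb $g_n^{(i)}$) keep $\|\overline{g_n^{(i)}}\|_{\ell^1/\C}$ bounded below. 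Assembling $\{g_n^{(i)}\}$ gives a nonzero element of $\ker(d:C^0_Q\to C^1_Q)$, i.e. $H^0_h(\{\Gamma_i\})\neq 0$. Pinning down the noncollapsing lower bound — ruling out that $\overline{g_n^{(i)}}\to 0$ in the quotient norm for all large $i$ — is the delicate heart of the argument and will use boundedness of propagation together with $|\Gamma_i|\to\infty$ and the everywhere-unital normalisation from Theorem \ref{symmetrisation}.
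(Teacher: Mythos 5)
Your overall strategy---symmetrise $\phi$ via Theorem \ref{symmetrisation}, use $d(\Gamma_i,\Gamma_i^c)\to\infty$ to localise $\phi_n$ to components, and extract a scalar-valued nonzero cocycle in $C^0_Q(\{\Gamma_i\})$---is the paper's, and you correctly locate the difficulty: producing a scalar function on each $\Gamma_i$ whose coboundary is summable over \emph{all} edges of $E_i$. But you then discard the construction that actually works and replace it with ones that fail at exactly that point. The paper's cocycle is precisely the ``evaluation at a basepoint'' you reject: $f^i_n(x)=\phi_n(x)(e^i_n)-\tfrac1{|\Gamma_i|}$. Your objection---that its coboundary involves the internal structure of $\phi_n$ at a fixed point and is ``not controlled by the cocycle condition''---is mistaken: one has $df^i_n(x_0,x_1)=\phi_n(x_1)(e^i_n)-\phi_n(x_0)(e^i_n)=D\phi_n(x_0,x_1)(e^i_n)$ identically (and your version, with the roles of the variables swapped, reduces to this under the symmetry you have already arranged). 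The genuine problem is only that the pointwise bound $|D\phi_n(x_0,x_1)(z)|\leq\norm{D\phi_n}_{R=1}$ is useless once summed over the $|E_i|\to\infty$ edges. The missing idea is the choice of evaluation point: take $e^i_n$ to \emph{minimise} $z\mapsto\sum_{(x_0,x_1)\in E_i}|D\phi_n(x_0,x_1)(z)|$ over $z\in\Gamma_i$. The minimum is at most the average, and exchanging the order of summation gives $\norm{df^i_n}_{\ell^1(E_i)}\leq\frac{1}{|\Gamma_i|}\sum_{z\in\Gamma_i}\sum_{e\in E_i}|D\phi_n(e)(z)|\leq\frac{|E_i|}{|\Gamma_i|}\norm{D\phi_n}_{R=1}\leq k\norm{D\phi_n}_{R=1}\to 0$, uniformly in $i$. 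Without some such averaging device the construction cannot close.

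Your substitute candidates inherit the same defect. For $g_n^{(i)}(x)=\langle\psi_n(x),\psi_n(x_0^{(i)})\rangle$, each edge-difference is $O(\norm{D\psi_n}_{R=1})$ and is supported on the $O(N_n)$ edges meeting $B_{2S_n}(x_0^{(i)})$, but the ball cardinality $N_n$ attached to the support radius $S_n$ bears no relation to the rate at which $\norm{D\psi_n}_{R=1}$ decays, so $\norm{dg_n^{(i)}}_{\ell^1(E_i)}$ need not tend to zero; for $g_n^{(i)}(x)=\norm{\phi_n(x)-\phi_n(x_0^{(i)})}_{\ell^1}$ the sum runs over all of $E_i$ and the situation is worse (and the $\sup$-$\ell^1$ bound on $g_n^{(i)}$ itself fails). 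You also never verify membership in $C^0(\{\Gamma_i\})$, i.e.\ that $\sigma_0^Q$ of your element vanishes; in the paper this is where symmetry and $\pi_*(\phi)=\oneQ$ enter, via $\sum_x\phi_n(x)(e^i_n)=\sum_x\phi_n(e^i_n)(x)\to 1$, which is what dictates the correction term $-1/|\Gamma_i|$. By contrast, the step you single out as the ``delicate heart''---non-vanishing of the class---is the easy half: $\phi_n(e^i_n)$ is supported on at most $N_n$ points while the term $1/|\Gamma_i|$ is spread over all of $\Gamma_i$, so $\norm{f^i_n}_{\ell^1}\geq 1-N_n/|\Gamma_i|$ and $f_n$ does not tend to zero.
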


\begin{proof}
Suppose there exists a cocycle $\phi\in\E^0_Q(\GAMMA,\ell^1(\GAMMA))$ such that $\pi_*(\phi)=\oneQ$. We will use this to construct a non-zero cocycle in $C^0_Q(\{\Gamma_i\})$ thus proving that $H_{h}^0(\{\Gamma_i\})$ is non-zero. By Theorem \ref{symmetrisation} we may assume that $\phi$ is a symmetric cocycle. 

For each $n\in \N$ the controlled support condition provides an $S_n>0$ such that for each $x\in \GAMMA$, the support of $\phi_n(x)$ lies in $B_{S_n}(y)$. As the distance between components tends to $\infty$, if $i$ is sufficiently large then the distance between $\Gamma_i$ and the other components of $\GAMMA$ exceeds $S_n$. Hence there exists $j_n$ such that if $i\geq j_n$ then $\phi_n(x)$ is supported in $\Gamma_i$ for all $x\in \Gamma_i$.

For each $i,n$, we choose a vertex $e^i_n \in \Gamma_i$ so that the infimum of $\!\!\!\!\!\sum\limits_{(x_0,x_1)\in E_i}\!\!\!\!\! |D\phi_n(x_0,x_1)(z)|$ over all $z\in \Gamma_i$ is realised at $z=e^i_n$, where $E_i$ denotes the set of edges of $\Gamma_i$. Note that the infimum is actually a minimum, since each $\Gamma_i$ is finite, and so such an $e^i_n$ exists. For $i\geq j_n$ we define $f^i_n\in \ell^1\Gamma_i$ by $f^i_n(x)=\phi_n(x)(e^i_n)-\frac{1}{|\Gamma_i|}$, and for $i<j_n$ we define $f^i_n$ to be 0. By symmetry of $\phi_n$, when $i\geq j_n$ we have
$$\sum_{x\in \Gamma_i}|f^i_n(x)|=\sum_{x\in \Gamma_i}|\phi_n(e^i_n)(x)-\frac{1}{|\Gamma_i|}|\leq \norm{\phi_n(e^i_n)}_{\ell^1}+1.$$
This is bounded in $i,n$, hence $f_n=(f^1_n, f^2_n,\dots)$ defines an element $f$ in the quotient completion of $\prod^\infty_{i\in\N}\ell^1(\Gamma_i)$. We will show that this is a non-zero cocycle in $C^0_Q(\{\Gamma_i\})$.

For $i<j_n$ we have $\sigma_0(f_n)(i)=\sum_{x\in \Gamma_i} f^i_n(x)=0$, while for $i\geq j_n$ we have
\begin{equation*}
\sum_{x\in \Gamma_i} f^i_n(x)=\sum_{x\in \Gamma_i} \Bigl(\phi_n(x)(e^i_n)-\frac1{|\Gamma_i|}\Bigr)=\sum_{x\in \Gamma_i} \Bigl(\phi_n(e^i_n)(x)-\frac1{|\Gamma_i|}\Bigr)= \pi_*(\phi_n)(e^i_n)-1.
\end{equation*}
by symmetry of $\phi_n$. Since $\pi_*(\phi)=\oneQ$, the sequence $\pi_*(\phi_n)(e^i_n)-1$  tends to zero (uniformly in $i$) as $n\to\infty$. Thus $\sigma_0^Q(f)=0$, so $f$ is an element of $C^0_Q(\{\Gamma_i\})$.

Recalling that the valencies of the $\Gamma_i$ are uniformly bounded, we have a bound $N_n$ on the cardinality of the balls $B_{S_n}(e^i_n)$. As $\phi_n(e^1_n)(x)=0$ outside $B_{S_n}(e^i_n)$, when $i\geq j_n$ we have the following lower bound for the $\ell^1$-norm of $f^i_n$:
\begin{equation*}
\|f^i_n\|_{\ell^1}\geq \sum_{x\in \Gamma_i \backslash B_{S_n}(e^i_n)}\frac{1}{|\Gamma_i|}\geq \frac{|\Gamma_i|-N_n}{|\Gamma_i|}=1-\frac{N_n}{|\Gamma_i|}.
\end{equation*}
Hence $\norm{f_n}_{\ell^1}\geq 1$ for all $n$. In particular $\norm{f_n}_{\ell^1}$ does not tend to zero, so $f$ is a \emph{non-zero} element of $C^0_Q(\{\Gamma_i\})$.

It remains to verify that $f$ is a cocycle. We apply the coboundary operator $d$ to $f^i_n$. This clearly vanishes when $i<j_n$, while for $i\geq j_n$ we have
$$d f^i_n(x_0,x_1)=f^i_n(x_1)-f^i_n(x_0)=D\phi(x_0,x_1)(e^i_n).$$
Our choice of $e^i_n$ now comes into play. Let $k$ be an upper bound on the valency of the graphs, so that $|E_i|/|\Gamma_i|\leq k$ for all $i$. Then we have
\begin{align*}
\|d f^i_n\|_{\ell^1}&\leq  \sum_{(x_0,x_1)\in E_i} |D\phi(x_0,x_1)(e^i_n)|\\
&=\frac{1}{|\Gamma_i|}\sum_{z\in \Gamma_i} \sum_{(x_0,x_1)\in E_i} |D\phi(x_0,x_1)(e^i_n)|\\
&\leq \frac{1}{|\Gamma_i|}\sum_{z\in \Gamma_i} \sum_{(x_0,x_1)\in E_i} |D\phi(x_0,x_1)(z)|\\
&\leq k\|D\phi_n\|_{R=1}
\end{align*}
as $\sum_{z\in \Gamma_i} |D\phi(x_0,x_1)(z)|\leq \|D\phi_n\|_{R=1}$. This tends to zero as $n\to\infty$ since $\phi$ is a cocycle. Hence $df=0$, so $f$ is a non-zero cocycle and $H_{h}^0(\{\Gamma_i\})$ is non-zero.
\end{proof}

Since property A is equivalent to existence of a cocycle $\phi\in\E^0_Q(X,\ell^1(X))$ such that $\pi_*(\phi)=\oneQ$, and a sequence of graphs is an expander if and only if $H_{h}^0(\{\Gamma_i\})$ vanishes we obtain the following immediate corollary to Theorem \ref{homology A implies not exp}.

\begin{cor}
Let $\GAMMA$ be the disjoint union of an expander sequence, with metric as above. Then $\GAMMA$ does not have property A.
\end{cor}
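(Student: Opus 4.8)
The plan is to obtain a contradiction between the two cohomological descriptions developed in this paper. Suppose, for contradiction, that $\GAMMA$ has property A. Then, by \cite[Theorem 7.2]{BNW} --- equivalently, by the implication (1) $\implies$ (2) of Theorem \ref{symmetrisation} --- there exists a cocycle $\phi\in\E^0_Q(\GAMMA,\ell^1(\GAMMA))$ with $\pi_*(\phi)=\oneQ$.

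I would then verify that $\GAMMA$ satisfies the hypotheses of Theorem \ref{homology A implies not exp}. Since $\{\Gamma_i\}$ is an expander sequence there is a common bound $k$ on the valencies of the graphs $\Gamma_i$, and $|\Gamma_i|\to\infty$ by the definition of an expander sequence; the remaining requirement, that $d(\Gamma_i,\Gamma_i^c)\to\infty$, is precisely the condition imposed on the metric on $\GAMMA$ (``metric as above''). Applying Theorem \ref{homology A implies not exp} to the cocycle $\phi$ then shows that $H_{h}^0(\{\Gamma_i\})$ is non-zero.

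On the other hand, the characterisation of expanders by Cheeger cohomology (the theorem of Section 2) asserts that, because $\{\Gamma_i\}$ is an expander sequence, $H_{h}^0(\{\Gamma_i\})=0$. This contradicts the conclusion of the previous paragraph, so the assumption that $\GAMMA$ has property A is untenable. The argument is a direct assembly of results already established, so there is no substantive obstacle; the only point needing attention is the routine check that the disjoint union $\GAMMA$, endowed with the specified metric, meets all three hypotheses of Theorem \ref{homology A implies not exp} --- in particular that the distances between its components diverge, which is guaranteed by the standing assumption on the metric.
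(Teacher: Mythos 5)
Your proposal is correct and follows exactly the paper's intended argument: combine Theorem \ref{homology A implies not exp} with the cohomological characterisations of property A and of expanders to obtain a contradiction. The paper presents this as an immediate consequence with no further detail, so your explicit verification of the hypotheses of Theorem \ref{homology A implies not exp} is, if anything, slightly more careful than the original.
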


\end{document}